\newcommand{\E}{\mathbb{E}}
\newcommand{\R}{\mathbb{R}}
\newcommand{\bx}{\mathbf{x}}
\newcommand{\Vol}{\mathop{\mathrm{Vol}}\nolimits}
\newtheorem{theorem}{Theorem}[section]
\newtheorem{lemma}[theorem]{Lemma}
\newtheorem{corollary}[theorem]{Corollary}
\newtheorem{proposition}[theorem]{Proposition}
\title[Correlations of algebraic numbers]{Correlations between \\ real conjugate algebraic numbers}
\author[F.~G\"otze]{Friedrich G\"otze}
\address{Friedrich G\"otze, Faculty of Mathematics,
Bielefeld University,
P. O. Box 10 01 31,
33501 Bielefeld, Germany}
\email{goetze@math.uni-bielefeld.de}
\author[D.~Kaliada]{Dzianis Kaliada}
\address{Dzianis Kaliada, Institute of Mathematics, National Academy of Sciences of Belarus, 220072 Minsk, Belarus}
\email{koledad@rambler.ru}
\author[D.~Zaporozhets]{Dmitry Zaporozhets}
\address{Dmitry Zaporozhets\\
St.\ Petersburg Department of
Steklov Institute of Mathematics,
Fontanka~27,
 191011 St.\ Petersburg,
Russia}
\email{zap1979@gmail.com}
\keywords{Conjugate algebraic numbers, correlations between algebraic numbers, distribution of algebraic numbers, integral polynomial, random polynomial}
\subjclass[2010]{11N45 (primary), 11C08, 60G55, 11R80 (secondary).}
\thanks{Supported by CRC 701, Bielefeld University (Germany).}
\begin{document}

\begin{abstract}
For $B\subset\R^k$ denote by $\Phi_k(Q;B)$ the number of ordered $k$-tuples in $B$ of real conjugate algebraic numbers of degree $\leq n$ and naive height $\leq Q$. We show that
$$
\Phi_k(Q;B) = \frac{(2Q)^{n+1}}{2\zeta(n+1)} \int_{B} \rho_k(\bx)\,d\bx + O\left(Q^n\right),\quad Q\to \infty,
$$
where the function $\rho_k$ will be given explicitly. If $n=2$, then an additional factor $\log Q$ appears in the reminder term.
\end{abstract}

\maketitle

\section{Introdution}
Baker and Schmidt~\cite{BS70} proved that the set of algebraic numbers of degree at most $n$ forms a \emph{regular system}: there exists a constant $c_n$ depending on $n$ only such that for any interval $I\subset\R^1$ and for all sufficiently large $Q\in\mathbb N$  there exist at least
\[
c_n|I|\, Q^{n+1} / (\log Q)^{3n(n+1)}
\]
algebraic numbers $\alpha_1,\dots,\alpha_l$ of degree at most $n$ and height at most $Q$ satisfying
\[
|\alpha_i-\alpha_j|\geq (\log Q)^{3n(n+1)} / Q^{n+1},\quad 1\leq i<j\leq l.
\]
Later Beresnevich~\cite{vB99} showed that the logarithmic factors can be omitted.

Beresnevich, Bernik, and G\"otze~\cite{BBG10} obtained the following result about the distribution of distances between conjugate algebraic numbers.
Let $n\ge 2$ and $0 < w \le \frac{n+1}{3}$. Then for all sufficiently large $Q$ and any interval $I\subset[-\frac12, \frac12]$ there exist at least $\frac12 Q^{n+1-2w}|I|$ real algebraic numbers $\alpha$ of degree $n$ and height $H(\alpha)\asymp_n Q$ having a real conjugate $\alpha^*$ such that $|\alpha - \alpha^*| \asymp_n Q^{-w}$.

The simplest example of algebraic numbers are the rational numbers. Appropriately ordered,  they form \emph{the Farey sequences}. It is well-known that they are equidistributed in $[0,1]$, see \cite{mM1949} for an elementary proof and~\cite{fD99} for a deeper discussion of the subject. For the history of the problem we refer the reader to~\cite{CZ2003}.

In 1971 Brown and Mahler \cite{BM71} introduced a natural generalization of the Farey sequences:
the Farey sequence of degree $n$ and order $Q$ is the set of all real roots of integral polynomials of degree $n$ and height at most $Q$.
The distribution of the generalized Farey sequences has been investigated in~\cite{dK14} (see also~\cite{dK13}, \cite{dK15} for the case $n=2$).

Namely, fix $n\geq2$ and consider an arbitrary interval $I\subset\mathbb{R}$. Denote by $\Phi(Q;I)$ a number of algebraic numbers $\alpha\in I$ of degree at most $n$ and height at most $Q$. Then we have that
\begin{equation}\label{2340}
\Phi(Q;I)=\frac{(2Q)^{n+1}}{2\zeta(n+1)}\int_I\rho(x)\,dx+ O\left(Q^n \log^{l(n)} Q\right),\quad Q\to \infty,
\end{equation}
where $\zeta(\cdot)$  denotes  the the Riemann zeta function and $l(n)$ is defined by
\begin{equation}\label{2304}
l(n) =  \begin{cases}
   1, & n=2,\\
   0,  & n\geq 3.
 \end{cases}
\end{equation}
The limit density $\rho$ is given by the formula
\begin{equation}\label{1848}
\rho(x)=2^{-n-1}\int_{D_x}\left|\sum_{j=1}^n jt_jx^{j-1}\right|\,dt_1\dots dt_n,
\end{equation}
where the domain of integration $D_x$ is defined by
\[
D_x=\left\{(t_1,\dots,t_n)\in\R^n\,:\,\max_{1\leq k\leq n}|t_k|\leq1, \ |t_nx^n+\dots+t_1 x|\leq1 \right\}.
\]
If $x\in[-1+1/\sqrt2, 1-1/\sqrt2]$, then~\eqref{1848} can be simplified as follows:
\[
\rho(x)=\frac{1}{12}\left(3+\sum_{k=1}^{n-1}(k+1)^2x^{2k}\right).
\]

The function $\rho$ coincides with the density of the real zeros of the random polynomial
\begin{equation}\label{2302}
G(x)=\xi_nx^n+\xi_{n-1}x^{n-1}+\dots+\xi_1x+\xi_0,
\end{equation}
where $\xi_0,\xi_1,\dots,\xi_{n}$ are independent random variables uniformly distributed on $[-1,1]$ (see, e.g.,~\cite{dZ05}). It means that for any Borel subset $B\subset\R^1$,
$$
\E N(G,B)=\int_B \rho(x)\,dx,
$$
where $N(G,B)$ denotes the number of zeros of $G$ lying in $B$. The real zeros of $G$ can be considered as a random point process. Its distribution  can be described by its $k$-point correlation functions $\rho_k(x_1,\dots,x_k), k=1,2,\dots,n$ (also known as joint intensities; see Section~\ref{1513} for definition). The one-point correlation function $\rho_1$ coincides with the density $\rho$. The explicit formula for $\rho_k$ has been obtained in~\cite{GKZ15b} (see Section~\ref{1513} for details).

The aim of this paper is to show that, like in the case $k=1$, the correlation functions $\rho_k$ are closely related with the joint distribution of real conjugate algebraic numbers.

Note that for $k=n$ we obtain the joint distribution of totally real algebraic numbers. These numbers (in particular, fields formed by them) are of great interest and possess some interesting properties. For example, lattices built by them are very well distributed in parallelepipeds, see \cite{mS1989}.

We also mention that there are a number of papers where total number of $k$-vectors whose coordinates form a field extension of degree $n$ over some base number field is considered. See e.g. \cite{MV07} or \cite{fB2014} for some interesting results and references. In this area the multiplicative Weil height is usually used to measure such vectors.

\section{Notations and main result}\label{1513}
Let us start with some notation. Fix some positive integer $n\geq 2$ and $k\leq n$. Denote
$$\bx=(x_1,\dots, x_k)\in\R^k.
$$

We use the following notation for the elementary symmetric polynomials:
$$
\sigma_i(\bx) :=
\left\{
  \begin{array}{ll}
    1, & \hbox{if}\quad i=0, \\
    \sum_{1\le j_1 < \dots < j_i\le k} x_{j_1} x_{j_2} \dots x_{j_i}, & \hbox{if}\quad 1\leq i\le k, \\
    0, & \hbox{otherwise.}
  \end{array}
\right.
$$

Denote by $\mathcal{P}(Q)$ the class of all integral polynomials of degree at most $n$ and height at most $Q$. The cardinality of this class is $(2Q+1)^{n+1}$.

Recall that an integral polynomial is called \emph{prime}, if it is irreducible over $\mathbb{Q}$, primitive (the greatest common divisor of its coefficients equals 1), and its leading coefficient is positive. Let $\mathcal{P^*}(Q)$ be the class of all prime polynomials from $\mathcal{P}(Q)$.

The \emph{minimal polynomial} of an algebraic number $\alpha$ is a prime polynomial such that $\alpha$ is a root of this polynomial.

For a Borel subset $B\subset\R^k$ denote by $\Phi_k(Q;B)$ the number of ordered $k$-tuples $(\alpha_1,\alpha_2,\dots,\alpha_k)\in B$ of \emph{distinct} real numbers such that for some $p\in\mathcal{P^*}(Q)$ it holds
$$
p(\alpha_1)=\dots=p(\alpha_k)=0.
$$
Essentially $\Phi_k(Q;B)$ denotes the number of ordered $k$-tuples in $B$ of conjugate algebraic numbers of degree at most $n$ and height at most $Q$.

Given a function $g:\R^1\to\R^1$ and a Borel subset $B\subset\R^k$ denote by $N_{k}(g,B)$ the number of ordered $k$-tuples $(x_1,x_2,\dots,x_k)\in B$ of \emph{distinct} real numbers such that
$$
g(x_1)=\dots=g(x_k)=0.
$$
For any algebraic number its minimal polynomial is prime, and any prime polynomial is a minimal polynomial for some algebraic number. Therefore we have that
\begin{equation}\label{1838}
\Phi_k(Q;B)= \sum_{p\in\mathcal{P}^*(Q)} N_{k}(p,B).
\end{equation}
Applying Fubini's theorem to the right hand side we obtain
\begin{equation}\label{022}
\Phi_k(Q;B)= \sum_{m=0}^\infty m\cdot\#\{p\in\mathcal{P}^*(Q)\,:\,N_{k}(p,B)=m\}.
\end{equation}
Since $N_{k}(p,B)\leq n!/(n-k)!$, the sum in the right hand side is finite.

Now we are ready to state our main result.
\begin{theorem}\label{2034}
Let $B$ be a region in $\mathbb{R}^k$ with boundary consisting of a finite number of algebraic surfaces. Then
\begin{equation}\label{eq-Psi}
\Phi_k(Q;B) = \frac{(2Q)^{n+1}}{2\zeta(n+1)} \int_{B} \rho_k(\bx)\,d\bx + O\left(Q^n  \log^{l(n)} Q \right),\quad Q\to \infty.
\end{equation}
Here the function $\rho_k$ is given by the formula
\begin{equation}\label{1451}
\rho_k(\mathbf{x}) =2^{-n-1} \prod_{1\le i < j \le k} |x_i - x_j|\int\limits_{D_{\bx}} \prod_{i=1}^k \left|\sum_{j=0}^{n-k} t_j x_i^j\right| \, dt_0\dots dt_{n-k},
\end{equation}
where the domain of integration $D_{\bx}$ is defined by
$$
D_{\bx}=\left\{(t_0,\dots, t_{n-k}) \in \R^{n-k+1}\,:\,\max_{0\leq i \leq n}\left|\sum_{j=0}^{n-k} (-1)^j \sigma_{k-i+j}(\bx)t_j\right|\leq 1\right\}.
$$
\end{theorem}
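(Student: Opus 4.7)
The overall strategy is to lift to $k$-point correlations the scheme that yields~\eqref{2340} in the case $k=1$. The starting identity is~\eqref{1838}. First I would replace the sum over prime polynomials by an unconstrained sum over all of $\mathcal{P}(Q)$. The sign of the leading coefficient only contributes a factor $\tfrac12$, since $p$ and $-p$ share their real zeros; reducible polynomials of degree $\le n$ and height $\le Q$ number $O(Q^n)$ and each contributes the bounded quantity $N_k(p,B)\le n!/(n-k)!$, so their total contribution is absorbed into the error; and primitivity is removed by M\"obius inversion, using $N_k(dp,B)=N_k(p,B)$. The outcome is
\[
\Phi_k(Q;B) = \tfrac12\sum_{d\ge 1}\mu(d)\sum_{p\in\mathcal{P}(\lfloor Q/d\rfloor)} N_k(p,B) + O(Q^n),
\]
so it suffices to study the asymptotics of $S(Q):=\sum_{p\in\mathcal{P}(Q)} N_k(p,B)$.

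Next I would analyse $S(Q)$ by parametrising each contributing triple $(p,\bx,q)$, where $p\in\mathcal{P}(Q)$, $\bx\in B$ is an ordered $k$-tuple of distinct real roots of $p$, and $q(x)=\sum_{j=0}^{n-k}t_j x^j$ is the quotient $p(x)/\prod_{i=1}^k(x-x_i)$. Expanding the factorisation shows that the coefficients of $p$ are linear forms in $t$ whose absolute values are precisely those appearing in the definition of $D_{\bx}$, so after the rescaling $t\mapsto t/Q$ the height constraint $p\in\mathcal{P}(Q)$ turns into $t\in D_{\bx}$. A direct computation of the Jacobian of $(t,\bx)\mapsto(c_0,\dots,c_n)$---for instance by evaluating the coefficient-to-value map at the $k$ chosen roots together with $n-k+1$ auxiliary points and factoring off two Vandermondes---yields
\[
\Bigl|\det\tfrac{\partial(c_0,\dots,c_n)}{\partial(t,\bx)}\Bigr| = \prod_{1\le i<j\le k}|x_j-x_i|\;\prod_{i=1}^{k}|q(x_i)|.
\]
Since this parametrisation is one-to-one off the measure-zero locus of multiple roots, the continuous analog of $S(Q)$ computes to
\[
\int_{[-Q,Q]^{n+1}} N_k(p_{\mathbf{c}},B)\,d\mathbf{c} = (2Q)^{n+1}\int_B \rho_k(\bx)\,d\bx,
\]
with $\rho_k$ as in~\eqref{1451}.

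The bulk of the proof then lies in the lattice-point estimate $S(Q) = (2Q)^{n+1}\int_B\rho_k\,d\bx + O(Q^n\log^{l(n)}Q)$. Using~\eqref{022} I would decompose $S(Q)$ into the integer-point counts of the level sets $\{\mathbf{c}\in\mathbb{Z}^{n+1}\cap[-Q,Q]^{n+1}:N_k(p_{\mathbf{c}},B)\ge m\}$ for $m=1,\dots,n!/(n-k)!$. Each such set is semi-algebraic, with boundary built from finitely many algebraic hypersurfaces inherited from $\partial B$ and from the discriminant-type loci where a root of $p_{\mathbf{c}}$ crosses $\partial B$ or coalesces with another root. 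A Davenport-type estimate for integer points in such regions bounds the discrepancy between the lattice count and the volume by the appropriate $n$-dimensional boundary measure, giving $O(Q^n)$ for $n\ge 3$; the extra $\log Q$ when $n=2$ arises from the thin slab near $c_n=0$ where the polynomial drops in degree, exactly as in the $k=1$ analysis of~\cite{dK14,dK15}. Controlling this discrepancy uniformly in $\bx$, together with the delicate interplay between the Vandermonde factor in the Jacobian and the height constraints near $\partial D_{\bx}$, is what I expect to be the main technical obstacle. Inserting $S(Q) = (2Q)^{n+1}\int_B\rho_k\,d\bx + O(Q^n\log^{l(n)}Q)$ into the M\"obius sum of the first paragraph, using $\sum_d\mu(d)d^{-n-1}=1/\zeta(n+1)$ and the convergence of $\sum d^{-n}$ for $n\ge 2$, then yields~\eqref{eq-Psi}.
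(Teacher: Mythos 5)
Your proposal is correct in outline and follows the same basic architecture as the paper: reduce the count over prime polynomials to a count over all integral polynomials (factor $\tfrac12$ for the sign of the leading coefficient, an error term for reducibles, M\"obius inversion for primitivity), then compare the lattice count in coefficient space with a volume, and finally identify that volume with $\int_B\rho_k$. The genuine differences are in how the last two steps are executed. The paper does not perform an explicit M\"obius inversion or a change of variables at all: it slices by the level sets $A_m=\{t\in[-1,1]^{n+1}: N_k(t_nx^n+\dots+t_0,B)=m\}$, invokes a ready-made lemma on coprime lattice points in regions with algebraic boundary (Lemma~\ref{lm-prim-pnt}, proved in the companion paper \cite{GKZ15}) to get $\lambda^*(QA_m)=\Vol(A_m)Q^{n+1}/\zeta(n+1)+O(Q^n)$, and then observes that $\sum_m m\Vol(A_m)=2^{n+1}\,\E N_k(G,B)$, citing \cite{GKZ15b} for the identity $\E N_k(G,B)=\int_B\rho_k$. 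Your Jacobian computation $\bigl|\det\partial(c_0,\dots,c_n)/\partial(t,\bx)\bigr|=\prod_{i<j}|x_i-x_j|\prod_i|q(x_i)|$ is exactly the content of that cited result, so you are internalizing the two external ingredients rather than quoting them; this makes your write-up more self-contained but also places the ``main technical obstacle'' (the uniform Davenport-type discrepancy bound for the semi-algebraic level sets) squarely inside your proof, where the paper simply delegates it to \cite{GKZ15}.

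Two concrete slips. First, your error term $O(Q^n)$ for discarding reducible polynomials is too optimistic when $n=2$: the number of reducible integral quadratics of height at most $Q$ is of order $Q^2\log Q$ (this is \eqref{021}, from \cite{bW36}), which is precisely where the factor $\log^{l(n)}Q$ in \eqref{eq-Psi} comes from. Consequently your attribution of the logarithm to the lattice-point count near the slab $c_n=0$ is misplaced: the regions $A_m$ live in $\R^{n+1}$ with $n+1\ge 3$, so Lemma~\ref{lm-prim-pnt} gives a clean $O(Q^n)$ there with no logarithm. Neither slip affects the final asymptotics, since the stated remainder $O(Q^n\log^{l(n)}Q)$ absorbs both, but the bookkeeping should be corrected.
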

The implicit big-O-constant in~\eqref{eq-Psi} depends on $n$, the number of the algebraic surfaces and their maximal degree only. The proof of Theorem~\ref{2034} is given in Section~\ref{1522}.
\begin{corollary}
The case $k=1$ implies~\eqref{2340}.
\end{corollary}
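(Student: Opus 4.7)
The plan is to show directly that specializing Theorem~\ref{2034} to $k=1$ reproduces~\eqref{2340}. The counting functions are immediately identified: an ordered $1$-tuple is a single real number, so $\Phi_1(Q;I)=\Phi(Q;I)$ for any interval $I\subset\R$, and an interval is a region in $\R^1$ whose boundary consists of a finite set of points (viewed as zero-dimensional algebraic surfaces), so the hypothesis of Theorem~\ref{2034} is satisfied and the error term $O(Q^n\log^{l(n)}Q)$ in~\eqref{eq-Psi} matches the one in~\eqref{2340}.

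It remains to verify that the density $\rho_1$ given by~\eqref{1451} equals the density $\rho$ from~\eqref{1848}. I would first simplify~\eqref{1451} for $k=1$: the product $\prod_{1\le i<j\le k}|x_i-x_j|$ is empty, and only $\sigma_0=1$ and $\sigma_1(x)=x$ are nonzero among the elementary symmetric polynomials, so in each constraint defining $D_x$ only the two indices $j=i-1$ and $j=i$ contribute, yielding $(-1)^i(xt_i-t_{i-1})$ with the convention $t_{-1}=t_n=0$. This reduces~\eqref{1451} to
$$
\rho_1(x)=2^{-n-1}\int_{D_x}\Biggl|\sum_{j=0}^{n-1}t_j x^j\Biggr|\,dt_0\dots dt_{n-1},
$$
where $D_x=\{(t_0,\dots,t_{n-1})\in\R^n:|xt_i-t_{i-1}|\le 1,\ 0\le i\le n\}$.

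The remaining step is a linear change of variables. I would interpret $(t_0,\dots,t_{n-1})$ as the coefficients of a polynomial $q(y)=\sum_{j=0}^{n-1}t_j y^j$ and define $p(y)=(y-x)q(y)=\sum_{k=0}^n v_k y^k$; direct expansion gives $v_k=t_{k-1}-xt_k$, so the constraints defining $D_x$ are precisely $|v_k|\le 1$ for $0\le k\le n$, i.e.\ the usual height bounds on $p$. The map $(t_0,\dots,t_{n-1})\mapsto(v_1,\dots,v_n)$ is linear, upper triangular with unit diagonal, and has Jacobian one. Under this substitution, the extra constraint $|v_0|\le 1$ becomes $|v_n x^n+\dots+v_1 x|\le 1$ since $p(x)=0$ forces $v_0=-(v_n x^n+\dots+v_1 x)$; differentiating $p(y)=(y-x)q(y)$ at $y=x$ gives $q(x)=p'(x)=\sum_{k=1}^n k v_k x^{k-1}$, so the integrand transforms accordingly. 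The resulting expression is precisely~\eqref{1848}.

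There is no serious obstacle: the only care required is the sign and index bookkeeping when matching the constraints of $D_x$ with the height conditions on $p$ and when checking that the triangular change of variables is indeed unimodular.
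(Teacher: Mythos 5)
Your proposal is correct, and it is more than the paper offers: the corollary is stated without proof, so your verification supplies the missing argument. The identification $\Phi_1(Q;I)=\Phi(Q;I)$ (via~\eqref{1838}, each algebraic number being counted once through its minimal polynomial), the reduction of the constraints of $D_x$ to $|xt_i-t_{i-1}|\le 1$ with $t_{-1}=t_n=0$, and the unimodular triangular substitution $v_k=t_{k-1}-xt_k$ coming from the factorization $p(y)=(y-x)q(y)$ — together with $p'(x)=q(x)$ for the integrand and $v_0=-(v_nx^n+\dots+v_1x)$ for the extra constraint — all check out and transform~\eqref{1451} exactly into~\eqref{1848}. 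This is precisely the computation one would expect (it mirrors the change of variables behind the derivation of $\rho_k$ in~\cite{GKZ15b}), and no step is missing.
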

\begin{corollary}\label{1520}
If $k=n$, then~\eqref{1451} can be simplified as follows:
$$
\rho_n(\mathbf{x}) = \frac{2^{-n}}{(n+1)}\left(\frac{1}{\max_{0\le i\le n} |\sigma_i(\mathbf{x})|}\right)^{n+1} \prod_{1\le i < j \le n} |x_i - x_j|.
$$

\end{corollary}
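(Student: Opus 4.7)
The plan is to simply specialize the general formula \eqref{1451} to the case $k=n$ and observe that the inner integral collapses to a one-dimensional integral whose value is elementary.

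First, I would substitute $k=n$ in the formula \eqref{1451}. Since $n-k=0$, the inner sum $\sum_{j=0}^{n-k} t_j x_i^j$ reduces to the single term $t_0$, so the product $\prod_{i=1}^k \bigl|\sum_{j=0}^{n-k} t_j x_i^j\bigr|$ becomes $|t_0|^n$. The integration variable is therefore only $t_0\in\R$, and the Vandermonde-type factor $\prod_{1\le i<j\le n}|x_i-x_j|$ is unaffected.

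Next, I would rewrite the domain $D_{\bx}$ for $k=n$. Each constraint has the form $|(-1)^0\sigma_{n-i}(\bx)\,t_0|\le 1$ for $i=0,1,\ldots,n$. As $i$ ranges over $\{0,1,\ldots,n\}$, the index $n-i$ ranges over $\{0,1,\ldots,n\}$ as well, so these constraints collectively say $|t_0|\cdot |\sigma_\ell(\bx)|\le 1$ for all $0\le\ell\le n$. Setting $M:=\max_{0\le\ell\le n}|\sigma_\ell(\bx)|$, this is equivalent to $|t_0|\le 1/M$. Hence $D_{\bx}=[-1/M,\,1/M]$.

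Finally, I would compute
\[
\int_{-1/M}^{1/M} |t_0|^n\,dt_0 \;=\; \frac{2}{(n+1)\,M^{n+1}},
\]
and plug in to get
\[
\rho_n(\bx) \;=\; 2^{-n-1}\cdot\frac{2}{(n+1)\,M^{n+1}}\prod_{1\le i<j\le n}|x_i-x_j|
\;=\; \frac{2^{-n}}{n+1}\Bigl(\frac{1}{\max_{0\le i\le n}|\sigma_i(\bx)|}\Bigr)^{n+1}\prod_{1\le i<j\le n}|x_i-x_j|,
\]
which is the desired identity. There is no real obstacle here; the only point that deserves care is the reindexing of the $n+1$ inequalities that cut out $D_{\bx}$, to confirm that they are exactly the constraints $|t_0\sigma_\ell(\bx)|\le 1$ for $\ell=0,\ldots,n$.
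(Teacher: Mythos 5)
Your proof is correct and is exactly the direct specialization of \eqref{1451} to $k=n$ that the paper intends (the corollary is stated without proof there): the integrand collapses to $|t_0|^n$, the $n+1$ constraints reduce to $|t_0|\le 1/\max_{0\le i\le n}|\sigma_i(\mathbf{x})|$, and the one-dimensional integral gives the stated constant. The reindexing step you flag is indeed the only point needing care, and you handle it correctly since $\sigma_{n-i}$ for $i=0,\dots,n$ runs over all of $\sigma_0,\dots,\sigma_n$.
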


It has been shown in~\cite[Section~3]{GKZ15b} that the function $\rho_k$ defined in~\eqref{1451} is a $k$-point correlation function of real zeros of the random polynomial $G$ defined in~\eqref{2302}. It  means that
for any Borel subset $B\subset\R^k$,
\begin{equation}\label{2053}
\mathbb{E}N_k(G,B)=\int_B\rho_k(\bx)\,d\bx.
\end{equation}

Let us derive several properties of $\rho_k$.

\begin{proposition}
a) For any permutation $s$ of length $n$,
$$
\rho_k(x_{s(1)}, x_{s(2)}, \dots, x_{s(k)}) = \rho_k(\bx).
$$
b) For all $\mathbf{x}\in\mathbb{R}^k$,
$$
\rho_k(- \mathbf{x}) = \rho_k(\mathbf{x}).
$$
c) For all $\bx\in\R^k$ with non-zero coordinates,
$$
\rho_{k}(x_1^{-1}, x_2^{-1}, \dots, x_k^{-1}) = \rho_{k}(\bx) \prod_{i=1}^k x_i^2.
$$
\end{proposition}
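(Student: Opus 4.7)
My plan is to exploit the probabilistic characterization~\eqref{2053}: $\rho_k$ is the $k$-point correlation function of the real zeros of the random polynomial $G$ from~\eqref{2302}, whose coefficients $\xi_0,\dots,\xi_n$ are i.i.d.\ uniform on $[-1,1]$. Each of the three identities will then follow by pairing an invariance of the law of $G$ under a natural transformation of $\R$ with the standard change-of-variables formula for $k$-point intensities.

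Parts~(a) and~(b) I expect to settle quickly. For~(a), I would simply note that every ingredient of~\eqref{1451} --- the prefactor $\prod_{i<j}|x_i-x_j|$, the domain $D_\bx$ (which depends on $\bx$ only through the $\sigma_i(\bx)$), and the product over $i$ in the integrand --- is manifestly symmetric in $x_1,\dots,x_k$. For~(b), I would observe that $G(-x)=\sum_j(-1)^j\xi_jx^j$ has coefficients $(-1)^j\xi_j$ that remain i.i.d.\ uniform on $[-1,1]$; hence $G(-\,\cdot\,)\stackrel{d}{=}G$. Its zero set is the reflection of that of $G$, and since $x\mapsto-x$ has unit Jacobian, equality of the associated correlation functions forces $\rho_k(-\bx)=\rho_k(\bx)$.

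The substantive case is~(c), where I plan to invoke the reciprocal polynomial $G^{\ast}(x):=x^nG(1/x)=\sum_{j=0}^n\xi_{n-j}x^j$. Its coefficients are merely a permutation of those of $G$, so $G^\ast\stackrel{d}{=}G$. Because $\xi_0\neq 0$ almost surely, $0$ is not a zero of $G$, and the real zeros of $G^\ast$ are precisely the reciprocals of the real zeros of $G$. The diffeomorphism $x\mapsto 1/x$ on $\R\setminus\{0\}$ has Jacobian of absolute value $1/x^2$, so the change-of-variables formula for $k$-point intensities applied to the two equidistributed point processes will give
\[
\rho_k(y_1,\dots,y_k)=\rho_k(1/y_1,\dots,1/y_k)\,\prod_{i=1}^k y_i^{-2},
\]
which, after rearranging and relabelling $y_i\mapsto x_i$, is exactly the identity claimed in~(c). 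The only delicate step I foresee is the Jacobian bookkeeping in this last computation; should a direct, integral-level proof be preferred, one could instead apply the substitutions $t_j\mapsto\sigma_k(\bx)\,t_{n-k-j}$ and $x_i\mapsto 1/x_i$ inside~\eqref{1451}, use the identity $\sigma_i(1/x_1,\dots,1/x_k)=\sigma_{k-i}(\bx)/\sigma_k(\bx)$ to match the two domains $D_{1/\bx}$ and $D_\bx$ up to the rescaling factor $|\sigma_k(\bx)|$, and then collect the various powers of $|x_i|$ to obtain the factor $\prod_i x_i^2$.
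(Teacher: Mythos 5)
Your proposal is correct, and for the substantive part (c) it takes a genuinely different route from the paper. The paper argues arithmetically: since $z\mapsto z^ng(z^{-1})$ maps irreducible integral polynomials of degree $n$ to irreducible ones of the same degree and height, $\Phi_k(Q;B^{-1})=\Phi_k(Q;B)$, and letting $Q\to\infty$ in the main theorem gives $\int_B\rho_k=\int_{B^{-1}}\rho_k$, from which the identity follows by the substitution $x_i\mapsto x_i^{-1}$. You instead derive the same integral identity probabilistically: the reversed polynomial $G^*(x)=x^nG(1/x)$ has coefficients that are a permutation of the i.i.d.\ coefficients of $G$, hence $G^*\stackrel{d}{=}G$, and since the real zeros of $G^*$ are the reciprocals of those of $G$ (a.s., as $\xi_0,\xi_n\neq0$ a.s.), the change-of-variables rule for $k$-point intensities yields $\int_B\rho_k=\int_{B^{-1}}\rho_k$ and then the same Jacobian computation. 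The trade-off is in the dependencies: the paper's argument leans on its own (hard) counting Theorem~2.1, while yours needs only the identification~\eqref{2053} of $\rho_k$ as the correlation function of $G$, cited from~\cite{GKZ15b}; in that sense yours is lighter and arguably cleaner. Both arguments, as stated, establish the identity only up to a Lebesgue-null set (the paper's closing phrase ``the class of sets $B$ is large enough'' has the same limitation), so both implicitly rely on the explicit formula~\eqref{1451} or continuity of $\rho_k$ to upgrade to a pointwise statement; your sketched direct substitution $t_j\mapsto\sigma_k(\bx)t_{n-k-j}$ inside~\eqref{1451} would avoid this caveat entirely, but you do not carry it out. Parts (a) and (b), which the paper dismisses as trivial, are handled adequately by your symmetry inspection and the observation that $(-1)^j\xi_j\stackrel{d}{=}\xi_j$.
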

\begin{proof}
The first and the second properties are trivial. To prove the last one, note that for any integral irreducible polynomial $g(z)$ of degree $n$, the polynomial $z^n g(z^{-1})$ is also irreducible and has the same degree and height. Therefore for any Borel set $B\subset \mathbb{R}^k$ which does not contain points with zero coordinates we have
\[
\Phi_k(Q;B^{-1}) = \Phi_k(Q;B),
\]
where $B^{-1}$ is defined as
\[
B^{-1} := \left\{(x_1^{-1}, x_2^{-1}, \dots, x_k^{-1}) : (x_1, x_2, \dots, x_k)\in B\right\}.
\]
Letting $Q$ tend to infinity, we obtain from~\eqref{eq-Psi} that
\[
\int_B \rho_k(\mathbf{x}) \,d\mathbf{x} = \int_{B^{-1}} \rho_k(\mathbf{x}) \,d\mathbf{x}.
\]
Making the substitution $(x_1,\dots, x_k)\to (x_1^{-1}, \dots, x_k^{-1})$, we obtain
\[
\int_B \rho_k(\mathbf{x}) \,d\mathbf{x} = \int_B \left(\prod_{i=1}^k x_i^{-2}\right) \rho_k(x_1^{-1}, x_2^{-1}, \dots, x_k^{-1}) \,d\mathbf{x}.
\]
Since the class of sets $B$ is large enough, the third property follows.
\end{proof}

\section{Proof of Theorem~\ref{2034}}\label{1522}
For a Borel set $A\subset\R^n$ denote by $\lambda^*(A)$ the number of points in $A$ with coprime integer coordinates.

Consider a set $A_m\subset[-1,1]^{n+1}$ consisting of all points $(t_0,\dots,t_n)\in[-1,1]^{n+1}$ such that
$$
N_k(t_nx^n+\dots+t_1x+t_0,B)=m.
$$
Then the number of primitive polynomials  $p\in\mathcal{P}(Q)$ such that $N_k(p,B)=m$ is equal to $\lambda^*(QA_m)$. Hence it follows from the definition of a prime polynomial that
\begin{equation}\label{020}
\left|\#\{p\in\mathcal{P}^*(Q)\,:\,N_{k}(p,B)=m\} - \frac12 \lambda^*(QA_m)\right|\leq R_Q,
\end{equation}
where $R_Q$ denotes the number of reducible  polynomials (over $\mathbb Q$) from $\mathcal{P}_Q$. Note that the factor $1/2$ arises  because prime polynomials have positive leading coefficient.
It is known (see~\cite{bW36}) that
\begin{equation}\label{021}
R_Q=O\left(Q^n\log^{l(n)}Q\right),\quad Q\to\infty.
\end{equation}
Combining~\eqref{020} and~\eqref{021} with~\eqref{022}, we obtain
\begin{equation}\label{245}
\Phi_k(Q;B)= \frac12 \sum_{m=0}^\infty m \lambda^*(Q A_m)+O\left(Q^n\log^{l(n)}Q \right),\quad Q\to\infty.
\end{equation}

To estimate $\lambda^*(QA_m)$, we need the following lemma.

\begin{lemma}\label{lm-prim-pnt}
Consider a region $A\subset\mathbb{R}^d$, $d\geq2,$ with boundary consisting of a finite number of algebraic surfaces. Then
\begin{equation}\label{1318}
\lambda^*(QA)=\frac{\Vol(A)}{\zeta(n)}Q^d+O\left(Q^{d-1}\log^{l(d)}Q\right),\quad Q\to\infty,
\end{equation}
where the implicit constant in the big-O-notation depends on $d$, the number of the algebraic surfaces and their maximal degree only.
\end{lemma}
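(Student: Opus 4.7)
The plan is to use M\"obius inversion to reduce counting primitive lattice points in $QA$ to counting all lattice points in dilates $QA/t$, and then to apply a Davenport-type lattice point estimate valid for regions bounded by algebraic surfaces.

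First I would write
$$
\lambda^*(QA) = \sum_{t \geq 1} \mu(t) \, \lambda(QA/t),
$$
where $\lambda(S) := \#(S \cap \mathbb{Z}^d)$. The sum is effectively finite: since $A$ is bounded, there is a constant $C = C(A)$ such that $\lambda(QA/t) = 0$ for all $t > CQ$.

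The key input is then a uniform lattice point estimate for regions whose boundary is a finite union of algebraic surfaces, of the form
$$
\bigl|\lambda(TA) - T^d \Vol(A)\bigr| = O\bigl(T^{d-1}\bigr), \qquad T \geq 1,
$$
with the implicit constant depending only on $d$, the number of surfaces, and their maximal degree. This is a classical result in the spirit of Davenport's theorem on lattice points in semi-algebraic sets, proved by slicing $A$ along one coordinate direction and using B\'ezout-type bounds on the number of connected components of the slices. This is also where I expect the main difficulty to lie: one needs the implicit constant to depend only on the combinatorial data of the boundary and not on the specific coefficients of the defining polynomials, so a naive slicing and induction on dimension is not by itself enough, and one must invoke Davenport's original theorem or a more recent quantitative refinement.

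Substituting this into the M\"obius inversion and splitting the sum at $t = Q$, the main term reads
$$
Q^d \Vol(A) \sum_{t \geq 1} \frac{\mu(t)}{t^d} + O\!\left( Q^d \sum_{t > Q} \frac{1}{t^d}\right) = \frac{\Vol(A)}{\zeta(d)} Q^d + O(Q),
$$
while the error contribution from the range $t \leq Q$ is
$$
O\!\left( Q^{d-1} \sum_{t \leq Q} \frac{1}{t^{d-1}} \right),
$$
which equals $O(Q^{d-1})$ for $d \geq 3$ and $O(Q \log Q)$ for $d = 2$, producing exactly the factor $\log^{l(d)} Q$ in the error term. The remaining range $Q < t \leq CQ$ contributes only $O(Q)$, since each summand is $O(1)$, and is absorbed into the overall error.
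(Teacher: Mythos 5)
Your argument is correct and is, in substance, the standard proof of this lemma. Note that the paper itself does not prove the statement at all: its ``proof'' consists of a citation to Bachmann's monograph and to the companion paper \cite{GKZ15}, and the M\"obius-inversion-plus-Davenport route you describe is precisely the argument those references carry out, so you have filled in what the paper outsources rather than diverged from it. Two remarks. First, your computation of the main term yields the constant $1/\zeta(d)$ rather than the $1/\zeta(n)$ printed in the statement; this is evidence that the lemma as stated contains a typo ($n$ should be $d$ --- indeed the paper applies the lemma with $d=n+1$ and writes $\zeta(n+1)$), and your version is the correct one. Second, two technicalities you pass over quickly: the identity $\lambda^*(QA)=\sum_{t\ge1}\mu(t)\,\lambda(QA/t)$ together with the claim that $\lambda(QA/t)=0$ for $t>CQ$ both fail when $0\in A$, since the origin lies in every dilate $QA/t$; one should count only nonzero lattice points throughout, which changes nothing because the origin is never primitive. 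Also, the uniformity of the Davenport-type constant in only $d$, the number of surfaces and their maximal degree cannot hold for arbitrary $A$ (the $O(T^{d-1})$ term involves the volumes of the lower-dimensional projections of $A$, hence its diameter); one must additionally assume $A$ lies in a fixed bounded box, as it does in the application, where $A_m\subset[-1,1]^{n+1}$. With these caveats your proof is complete and gives exactly the error term $O\left(Q^{d-1}\log^{l(d)}Q\right)$.
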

\begin{proof}
The results of this type are well-known, see, e.g., the  classical monograph by Bachmann \cite[pp. 436--444]{pB94} (in particular, formulas~(83a) and~(83b) on pages 441--442). For the proof of Lemma~\ref{lm-prim-pnt}, see~\cite{GKZ15}.
\end{proof}


Since the boundary of $B$ consists of a finite number of algebraic surfaces, the same is true for $A_m$. Hence it follows from Lemma~\ref{lm-prim-pnt} that
\[
\lambda^*(QA_m)=\frac{\Vol(A_m)}{\zeta(n+1)}Q^{n+1}+O\left(Q^n\right),\quad Q\to\infty,
\]
which together with~\eqref{245} implies
\begin{equation}\label{1140}
\Phi_k(Q;\Omega)= \frac{Q^{n+1}}{2\zeta(n+1)}\sum_{m=0}^\infty m\Vol(A_m) +O\left(Q^n \log^{l(n)} Q\right),\quad Q\to\infty.
\end{equation}

To calculate $\sum_{m=0}^\infty m\Vol(A_m)$, note that
\[
\Vol(A_m) = 2^{n+1}\,\mathbb{P}(N_k(G,B)=m),
\]
where $G$ is the random polynomial defined in~\eqref{2302}. Hence
\begin{equation}\label{1141}
\sum_{m=0}^\infty m\Vol(A_m) = 2^{n+1}\, \mathbb{E} N_k(G,B).
\end{equation}
Applying~\eqref{2053} finishes the proof.

\bigskip

{\bf Acknowledgments.} The authors are grateful to Vasily Bernik for many useful discussions.

\bibliographystyle{plain}
\bibliography{corrf4}


\end{document}